\newtheorem{definition}{Definition}[section]
\newtheorem{theorem}[definition]{Theorem}
\newtheorem{corollary}[definition]{Corollary}
\newtheorem{example}[definition]{Example}
\newtheorem{lemma}[definition]{Lemma}
\newtheorem{proposition}[definition]{Proposition}
\title{On the integer $\{k\}$-domination number of circulant graphs}
\date{May 9, 2019}
\author{
Yen-Jen Cheng,
\thanks{Department of Mathematics, National Taiwan Normal University, Taipei 11677, Taiwan.
{\tt Email: yjc7755@gmail.com}}
\and
Hung-Lin Fu,
\thanks{Department of Applied Mathematics, National Chiao Tung University, Hsinchu 30010, Taiwan.
{\tt Email: hlfu@math.nctu.edu.tw}}
\and
Chia-an Liu
\thanks{Department of Mathematical Sciences, University of Delaware Newark, Delaware 19716, U.S.A.
{\tt Email: liuchiaan8@gmail.com}}
}
\begin{document}
\maketitle

\begin{abstract}
Let $G=(V,E)$ be a simple undirected graph. $G$ is a circulant graph defined on $V=\mathbb{Z}_n$ with difference set $D\subseteq \{1,2,\ldots,\lfloor\frac{n}{2}\rfloor\}$ provided two vertices $i$ and $j$ in $\mathbb{Z}_n$ are adjacent if and only if $\min\{|i-j|, n-|i-j|\}\in D$. For convenience, we use $G(n;D)$ to denote such a circulant graph.

A function $f:V(G)\rightarrow\mathbb{N}\cup\{0\}$ is an integer $\{k\}$-domination function if for each $v\in V(G)$, $\sum_{u\in N_G[v]}f(u)\geq k.$ By considering all $\{k\}$-domination functions $f$, the minimum value of $\sum_{v\in V(G)}f(v)$ is the $\{k\}$-domination number of $G$, denoted by $\gamma_k(G)$. In this paper, we prove that if $D=\{1,2,\ldots,t\}$, $1\leq t\leq \frac{n-1}{2}$, then the integer $\{k\}$-domination number of $G(n;D)$ is $\lceil\frac{kn}{2t+1}\rceil$.
\end{abstract}
\bigskip

\noindent MSC 2010: 05C69, 11A05.

\noindent Keywords: Circulant graph, integer $\{k\}$-domination number, Euclidean algorithm.

\section{Introduction and preliminaries}
The study of domination number of a graph $G$ has been around for quite a long time. Due to its importance in applications, there are various versions of extension study, see \cite{hhs98} for reference.

The idea of integer $\{k\}$-domination was proposed by Domke et al. in \cite{dhlf91}. It can be dealt as a labeling problem. The vertices of the graph $G$ are labeled by integers in $\mathbb{N}\cup \{0\}$ such that for each vertex $v$, the total (sum) values in its closed neighborhood $N_G[v]$ must be at least $k$. The problem is asking for finding the minimum total value labeled on $G$. Finally, we say that $f:V(G)\rightarrow \mathbb{N}\cup\{0\}$ is an integer $\{k\}$-domination function if for each $v\in V(G)$, $\sum_{u\in N_G[v]}f(u)\geq k$. Among all such functions $f$, the minimum value of $\sum_{v\in V(G)}f(v)$ is called the integer $\{k\}$-domination number of $G$, denoted by $\gamma_k(G)$.

It is not difficult to see that the original domination number of a graph $G$, $\gamma(G)$, can be recognized as $\gamma_1(G)$ since the vertices with label "1" gives a dominating set. For more information about domination problem, the readers may refer to \cite{cm12,gprt11,g06,hhs98,js13}.
Hence, the integer $\{k\}$-domination problem is also an NP-hard problem. So far, results obtained are all on special classes of graphs, see \cite{bhk06,dhlf91,hl09,k17}.

In this paper, we shall consider the class of circulant graph $G=G(n;D)$ where $D=\{1,2,\ldots,t\}$, $1\leq t\leq \frac{n-1}{2}$, i.e., $V(G)=\mathbb{Z}_n$ and two vertices $i$ and $j$ are adjacent if and only if $d(i,j):=\min\{|i-j|,n-|i-j|\}\in D$. Since $D=\{1,2,\ldots,t\}$, $G(n;D)$ is exactly the power graph $C_n^t$ where $C_n$ is a cycle of order $n$.

The following results are obtained by Lin \cite{l18}. For clearness, we also outline its proof in which basic linear algebra is applied.
\begin{proposition}[\cite{l18}]\label{prop1.1}
Let $G$ be the circulant graph $G(n;D)$ where $D=\{1,2,\ldots,t\}$. Then, $\gamma_k(G)\geq \lceil\frac{kn}{2t+1}\rceil.$
\end{proposition}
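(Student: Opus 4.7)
The plan is to observe that $G = G(n;\{1,2,\ldots,t\}) = C_n^t$ is $2t$-regular, so $|N_G[v]| = 2t+1$ for every vertex $v$, and to exploit this uniformity via a double-counting argument (equivalently, a one-line linear-algebra computation, matching the hint in the excerpt). First I would fix an arbitrary integer $\{k\}$-domination function $f$ and sum its defining inequality over all $v \in V(G)$ to obtain
\[
\sum_{v \in V(G)} \sum_{u \in N_G[v]} f(u) \;\geq\; kn.
\]

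Next I would exchange the order of summation on the left-hand side. Since $u \in N_G[v]$ if and only if $v \in N_G[u]$, each vertex $u$ contributes $f(u)$ exactly $|N_G[u]| = 2t+1$ times, so the left-hand side equals $(2t+1)\sum_{u \in V(G)} f(u)$. Dividing yields $\sum_{u} f(u) \geq kn/(2t+1)$. The linear-algebra phrasing is the same computation: writing the domination constraint as $A\mathbf{f} \geq k\mathbf{1}$ with $A$ the closed-neighborhood $0/1$ matrix, one notes that $\mathbf{1}^{T} A = (2t+1)\mathbf{1}^{T}$ by $2t$-regularity and takes the inner product of the constraint with $\mathbf{1}$.

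Finally, since $\sum_{u} f(u)$ is a nonnegative integer, it must be at least $\lceil kn/(2t+1)\rceil$, and because $f$ was arbitrary this bound passes to $\gamma_k(G)$. I do not anticipate any real obstacle here: the whole argument depends only on regularity plus integrality, both immediate from the setup. The substantive work of the paper should lie in the matching upper-bound construction for $\gamma_k(G)$, not in this lower bound.
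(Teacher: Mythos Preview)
Your proposal is correct and is essentially the paper's own argument: the paper writes the same double count in matrix form as $(2t+1)\sum_v f(v) = \mathbf{f}^T (A+I_n)\mathbf{1}_n \geq k\mathbf{1}_n^T\mathbf{1}_n = kn$, which is exactly your linear-algebra phrasing. Your explicit mention of the integrality step giving the ceiling is a small clarification the paper leaves implicit.
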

\begin{proof}
Let $A$ be the adjacency matrix of $G$ and $f$ be an $\{k\}$-domination function of $G$. Let ${\bf 1}_n$ denote the all 1 column vector of length $n$. Then, we have
$$(2t+1)\sum_{v\in V(G)}f(v)=(f(v_1),f(v_2),\ldots,f(v_n))(A+I_n){\bf 1}_n\geq {\bf 1}_n^T\cdot k\cdot{\bf 1}_n=nk,$$
which implies the inequality.
\end{proof}

By the aid of an algorithm, Lin was able to show the following.
\begin{proposition}[\cite{l18}]\label{prop1.2}
For $t\leq 5$, $\gamma_k(G(n;\{1,2,\ldots,t\}))=\lceil\frac{kn}{2t+1}\rceil$.
\end{proposition}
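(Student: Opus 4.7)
Since Proposition~\ref{prop1.1} already supplies the lower bound, the whole task is to exhibit an integer $\{k\}$-domination function $f$ of $G=G(n;\{1,\ldots,t\})$ with $\sum_{v}f(v)=S$, where $S:=\lceil kn/(2t+1)\rceil$. Writing $m=2t+1$, I will try the ``balanced rounding'' of the fractional optimum $k/m$ given by
\[
f(v) = \left\lfloor \tfrac{S(v+1)}{n} \right\rfloor - \left\lfloor \tfrac{Sv}{n} \right\rfloor, \qquad v=0,1,\ldots,n-1.
\]
Each $f(v)$ is immediately a non-negative integer (in fact $f(v)\in\{\lfloor S/n\rfloor,\lceil S/n\rceil\}$) and the sum telescopes to $\sum_{v=0}^{n-1} f(v)=S$, so only the domination inequality needs to be checked.

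The closed neighborhood $N_G[v]=\{v-t,v-t+1,\ldots,v+t\}$ (taken cyclically mod $n$) is precisely a cyclic window of $m$ consecutive residues, so the condition $\sum_{u\in N_G[v]}f(u)\ge k$ translates to: every cyclic window of length $m$ in $\mathbb{Z}_n$ has $f$-sum at least $k$. For a non-wrapping window $\{i,i{+}1,\ldots,i{+}m{-}1\}$ with $i+m\le n$, the telescoping identity immediately collapses the sum to $\lfloor S(i+m)/n\rfloor-\lfloor Si/n\rfloor$. For a wrap-around window I would split the sum at $n$ and use the identity $\lfloor x-S\rfloor=\lfloor x\rfloor-S$ (valid because $S\in\mathbb{Z}$) to obtain the same closed-form expression $\lfloor S(i+m)/n\rfloor-\lfloor Si/n\rfloor$.

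The standard inequality $\lfloor a+b\rfloor-\lfloor a\rfloor\ge\lfloor b\rfloor$ then bounds every window sum below by $\lfloor Sm/n\rfloor$. Because $S\ge kn/m$ one has $Sm\ge kn$, hence $Sm/n\ge k$; since $k$ is an integer this promotes to $\lfloor Sm/n\rfloor\ge k$. Thus $f$ is a valid integer $\{k\}$-domination function of weight $S$, which matches the lower bound of Proposition~\ref{prop1.1} and finishes the proof.

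The only spot that requires genuine care is the uniform handling of cyclic versus non-cyclic windows, i.e., verifying that the telescoping closed form survives the wrap-around at $n$; everything else is a single line of floor arithmetic. Notably the restriction $t\le 5$ plays no role in this argument, so the same construction should in fact deliver the general theorem announced in the abstract, not just Proposition~\ref{prop1.2}.
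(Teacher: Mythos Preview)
Your argument is correct, and in fact it proves the full Theorem~\ref{thm_main}, not merely the case $t\le 5$ of Proposition~\ref{prop1.2}. Every step checks out: the telescoping gives total weight $S$; the wrap-around window reduces to the same closed form because $\lfloor x-S\rfloor=\lfloor x\rfloor-S$ for integer $S$; and the inequality $\lfloor a+b\rfloor-\lfloor a\rfloor\ge\lfloor b\rfloor$ together with $Sm/n\ge k$ and $k\in\mathbb{Z}$ yields $\lfloor Sm/n\rfloor\ge k$. The hypothesis $t\le(n-1)/2$ guarantees $m\le n$, so length-$m$ cyclic windows are well defined.

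The paper does not itself prove Proposition~\ref{prop1.2} (it is cited from~\cite{l18}, where it was obtained by an ad~hoc algorithm), but it does prove the general statement as Theorem~\ref{thm_main}. That proof is substantially more elaborate than yours: it builds the labeling out of blocks of the form $\lfloor(k+\alpha)/b\rfloor$ with $b=2t+1$, where the exponents $\alpha$ are arranged by a recursive ``extension sequence'' construction driven by the Euclidean algorithm on $(a,b)$ with $a\equiv n\pmod b$. Several auxiliary lemmas (on ``nice'' permutations, on which subintervals have length $q+1$, and a sequence-comparison lemma) are needed to verify that every length-$b$ window of the resulting cyclic word dominates the sorted block $[0,1,\ldots,b-1]$. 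Your balanced-rounding construction $f(v)=\lfloor S(v{+}1)/n\rfloor-\lfloor Sv/n\rfloor$ bypasses all of this machinery: a single floor inequality replaces the recursive combinatorics. The paper's approach does give an explicit description of the labels as $\lfloor(k+\alpha)/b\rfloor$ with $\alpha\in\{0,\ldots,b-1\}$ and makes the connection to the continued-fraction expansion of $a/b$ transparent, but for the purpose of establishing the bound your argument is both shorter and more general out of the box.
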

But, for larger $t$, it remains unsettled. Our main result of this paper shows that the equality holds for all $1\leq t\leq \frac{n-1}{2}$.

\section{The main result}
By Proposition \ref{prop1.1}, in order to determine $\gamma_k(G)$, it suffices to show that $\gamma_k(G)\leq \lceil\frac{nk}{2t+1}\rceil$. That is, we need a proper distribution of values for $f(v_1),f(v_2),\ldots,f(v_n)$ such that for each $v_i$, $\sum_{u\in N_G[v_i]}f(u)\geq k$ and $\sum_{i=1}^n f(v_i)\leq \lceil\frac{nk}{2t+1}\rceil$. Since we are dealing with circulant graphs, $\sum_{u\in N_G[v_i]}f(u)$ is in fact the sum of $2t+1$ consecutive labels assigned to the circle $C_n=(v_1,v_2,\ldots,v_n)$. Therefore, we turn our focus on providing suitable labels to meet the condition.

For example, let $n=8$ and $t=2$. Then, the following labeling of $(v_1,v_2,\ldots,v_8)$, $(x_4,x_3,x_1,x_2,x_0,x_4,x_3,x_1)$ will satisfy the requirement, where $x_i=\lfloor\frac{k+i}{5}\rfloor$, $i=0,1,2,3,4.$

We are considering $1\leq t\leq \frac{n-1}{2}$ in what follows. First, we need an estimaton of the sum of rational numbers which take its floor or ceiling values.
\begin{lemma}\label{lem2.1}
For positive integers $a,b$ and nonnegative integer $k$, we have the following.
\begin{enumerate}
\item[(1)] $\lfloor\frac{k+\lfloor x\rfloor}{a}\rfloor=\lfloor \frac{k+x}{a} \rfloor$
for any real number $x,$
\item[(2)] $k=\sum_{i=0}^{a-1}\lfloor \frac{k+i}{a} \rfloor$, and
\item[(3)] $\lceil\frac{ak}{b}\rceil = \sum_{i=1}^{a}\lfloor\frac{k+\lceil ib/a \rceil-1}{b}\rfloor$.
\end{enumerate}
\end{lemma}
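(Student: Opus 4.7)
My plan is to handle the three parts in order, since (1) feeds into (2) and both feed into (3).

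For part (1), the approach is a direct calculation with the division algorithm. Write $x=\lfloor x\rfloor+\{x\}$ where $\{x\}\in[0,1)$, and divide $k+\lfloor x\rfloor$ by $a$ to get $k+\lfloor x\rfloor=aq+r$ with $0\le r\le a-1$. Then the left-hand side is $q$ by definition, while $(k+x)/a=q+(r+\{x\})/a$, and the numerator $r+\{x\}$ lies in $[0,a)$, so the right-hand side is also $q$.

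For part (2), I would run the division algorithm on $k$: write $k=aq+r$ with $0\le r\le a-1$. For $i\in\{0,\ldots,a-1-r\}$ the numerator $k+i$ has the form $aq+(r+i)$ with $r+i<a$, contributing $q$ to the sum; for $i\in\{a-r,\ldots,a-1\}$ the numerator is $a(q+1)+(r+i-a)$ with $0\le r+i-a<a$, contributing $q+1$. Counting gives $(a-r)q+r(q+1)=aq+r=k$.

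Part (3) is the substantive step and where the main bookkeeping lives. The key observation is that for the integer $ib\ge 1$ we have $\lceil ib/a\rceil-1=\lfloor(ib-1)/a\rfloor$, so each summand can be rewritten, by part (1) applied with $x=(ib-1)/a$, as
\[
\left\lfloor\frac{k+\lceil ib/a\rceil-1}{b}\right\rfloor=\left\lfloor\frac{k+(ib-1)/a}{b}\right\rfloor=\left\lfloor\frac{ak+ib-1}{ab}\right\rfloor.
\]
After the shift $j=i-1$ the sum becomes $\sum_{j=0}^{a-1}\lfloor((ak+b-1)+jb)/(ab)\rfloor$. Now I would divide $ak+b-1$ by $ab$, write $ak+b-1=abm+pb+u$ with $0\le p\le a-1$ and $0\le u<b$, and observe that $\lceil ak/b\rceil=\lfloor(ak+b-1)/b\rfloor=am+p$. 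A short check shows each summand equals $m+\lfloor(p+j)/a\rfloor$, and applying part (2) to $\sum_{j=0}^{a-1}\lfloor(p+j)/a\rfloor=p$ collapses the total to $am+p=\lceil ak/b\rceil$.

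The main obstacle is purely the bookkeeping in (3): keeping track of the nested floor/ceiling conversions, isolating the correct remainder so that part (2) applies cleanly, and verifying that the inner floor $\lfloor((p+j)b+u)/(ab)\rfloor$ really simplifies to $\lfloor(p+j)/a\rfloor$ (which needs $r'b+u<ab$ in the division of $p+j$ by $a$). Parts (1) and (2) are both short, and once (3) is reorganized in the form above, everything reduces to those two identities.
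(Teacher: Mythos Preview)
Your argument is correct. Both proofs use the same two tools---identity (1) to strip or insert a floor around a rational argument, and identity (2) to collapse a sum---and both pass through the common intermediate expression $\sum_{j=0}^{a-1}\lfloor (ak+b-1+jb)/(ab)\rfloor$. The difference is direction: you start from the right-hand side, reduce to that intermediate sum, and then evaluate it via the auxiliary decomposition $ak+b-1=abm+pb+u$ together with (2) applied to $p$; the paper instead starts from the left-hand side, applies (2) directly to $\lceil ak/b\rceil$ to get $\sum_{i=0}^{a-1}\lfloor(\lceil ak/b\rceil+i)/a\rfloor$, and then uses (1) twice (with a line of algebra in between) to transform each summand into the desired form. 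The paper's route is marginally slicker because applying (2) upfront avoids your extra division-with-remainder step and the verification that $\lfloor((p+j)b+u)/(ab)\rfloor=\lfloor(p+j)/a\rfloor$, but your version is equally valid and the bookkeeping you flag as the main obstacle does go through exactly as you describe.
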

\begin{proof}
(1) and (2) are easy to check, we prove (3).
\begin{eqnarray}
\lceil\frac{ak}{b}\rceil &=& \sum_{i=0}^{a-1}\lfloor \frac{\lceil ak/b \rceil+i}{a} \rfloor
\nonumber   \\
&=& \sum_{i=0}^{a-1}\lfloor \frac{\lfloor (ak+b-1)/b \rfloor+i}{a} \rfloor
= \sum_{i=0}^{a-1}\lfloor \frac{(ak+b-1)/b+i}{a} \rfloor
\nonumber   \\
&=& \sum_{i=0}^{a-1}\lfloor \frac{k+((i+1)b-1)/a}{b} \rfloor
= \sum_{i=0}^{a-1}\lfloor \frac{k+\lfloor((i+1)b-1)/a\rfloor}{b} \rfloor
\nonumber   \\
&=& \sum_{i=0}^{a-1}\lfloor \frac{k+\lceil((i+1)b-1-a+1)/a\rceil}{b} \rfloor
= \sum_{i=0}^{a-1}\lfloor\frac{k+\lceil (i+1)b/a \rceil-1}{b}\rfloor.
\nonumber
\end{eqnarray}
\end{proof}
\medskip

Since the variables $a,b$ and $k$ are all integers, the uniqueness of the formula in Lemma~\ref{lem2.1}(3) can be confirmed.
\begin{corollary}\label{cor_ceil_floor}
{If integers $0\leq s_0\leq s_1\leq\cdots\leq s_{a-1}<b$ satisfy
$$\lceil\frac{ak}{b}\rceil = \sum_{i=0}^{a-1}\lfloor\frac{k+s_i}{b}\rfloor$$
for positive integers $a,b$ and nonnegative integer $k$, then $s_i=\lceil (i+1)b/a \rceil-1$ for $i=0,1,\ldots,a-1.$
}
\qed
\end{corollary}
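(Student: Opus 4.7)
The plan is to use the hypothesis, applied at suitable values of $k$, to pin down, for every integer threshold $0 \leq j \leq b-1$, the number of indices $i$ with $s_i \geq j$. Because $s_0 \leq s_1 \leq \cdots \leq s_{a-1}$ is sorted and contained in $\{0,1,\ldots,b-1\}$, these counts determine the sequence uniquely, and Lemma~\ref{lem2.1}(3) then exhibits the required closed form.

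Concretely, I would restrict attention to $1 \leq k \leq b$. For such $k$, the bound $0 \leq s_i < b$ forces $\lfloor (k+s_i)/b \rfloor \in \{0,1\}$, with value $1$ occurring precisely when $s_i \geq b-k$. The hypothesis therefore collapses to the counting identity
\[
\bigl|\{\, i : s_i \geq b-k \,\}\bigr| = \lceil ak/b \rceil, \qquad 1 \leq k \leq b.
\]
Substituting $j = b-k$ and using that $\lceil a(b-j)/b \rceil = a - \lfloor aj/b \rfloor$ for integers, I obtain
\[
\bigl|\{\, i : s_i \geq j \,\}\bigr| = a - \lfloor aj/b \rfloor, \qquad 0 \leq j \leq b-1.
\]
Since the sequence is sorted, this is equivalent to $s_i = \max\{\, j : a - \lfloor aj/b \rfloor \geq a - i \,\}$, so each $s_i$ is now uniquely determined by $(a,b)$.

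The remaining step, which I regard as the only one requiring care, is to identify this maximum with $\lceil (i+1)b/a \rceil - 1$. The inequality $\lfloor aj/b \rfloor \leq i$ is equivalent to $aj \leq (i+1)b - 1$, and the largest integer $j$ satisfying this is $\lfloor ((i+1)b - 1)/a \rfloor$, which in turn equals $\lceil (i+1)b/a \rceil - 1$ by the elementary identity $\lfloor (m-1)/a \rfloor = \lceil m/a \rceil - 1$ for positive integers $m$. I do not anticipate any conceptual obstacle; the delicate point is simply the careful bookkeeping of strict versus non-strict inequalities when inverting a floor function, which is routine once the set-up above is in place.
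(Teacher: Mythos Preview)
Your argument is correct: evaluating the hypothesis at $k=1,\ldots,b$ reduces each summand to a $\{0,1\}$-indicator, yielding the exact count $|\{i:s_i\geq j\}|=a-\lfloor aj/b\rfloor$, and inverting this for a sorted sequence gives $s_i=\lfloor((i+1)b-1)/a\rfloor=\lceil(i+1)b/a\rceil-1$ as claimed. The paper itself provides no proof at all (the corollary is stated with a bare \qed\ and the remark that uniqueness ``can be confirmed'' from Lemma~\ref{lem2.1}(3)), so your write-up is a legitimate and complete way to fill in what the paper leaves implicit.
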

\medskip

According to the $s_i$'s given above, we split $[b]:=\{0,1,\ldots,b-1\}$ into subintervals with maximal elements $s_i$'s.
For positive integers $a<b,$ let $[b]$ be partitioned into $a$ subsets such that
$$S_i=\left\{\lceil \frac{bi}{a}\rceil,\lceil \frac{bi}{a}\rceil+1,\ldots,\lceil \frac{b(i+1)}{a}\rceil-1\right\}$$
for $i=0,1,\ldots,a-1.$ It is clear that {$||S_i|-|S_j||$}$\leq 1$ for all $i,j.$
We analyze the subsets containing more elements in the following.
\begin{lemma}       \label{lemma_set_q+1}
Let $q$ and $r$ be the quotient and remainder of $b$ divided by $a,$ respectively. Then the cardinality $|S_{i}|=q+1$ if $i=\lfloor \frac{aj}{r}\rfloor$ for $j=0,1,\ldots,r-1.$
\end{lemma}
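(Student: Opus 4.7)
The plan is to apply the division algorithm to isolate the fractional behavior of the ceilings. Writing $b = qa + r$ with $0 \le r < a$, I would first observe that $\lceil bi/a \rceil = qi + \lceil ri/a \rceil$, so
\[
|S_i| = \left\lceil \frac{b(i+1)}{a} \right\rceil - \left\lceil \frac{bi}{a} \right\rceil = q + \left( \left\lceil \frac{r(i+1)}{a} \right\rceil - \left\lceil \frac{ri}{a} \right\rceil \right).
\]
Since $0 \le r/a < 1$, the parenthesized correction can only take the values $0$ or $1$, so $|S_i| = q+1$ precisely when $\lceil r(i+1)/a \rceil > \lceil ri/a \rceil$. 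The problem thus reduces to locating the integers $i \in \{0,1,\ldots,a-1\}$ at which the step function $i \mapsto \lceil ri/a \rceil$ jumps.

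Next I would show directly that every index $i_j := \lfloor aj/r \rfloor$ with $j \in \{0,1,\ldots,r-1\}$ is such a jump point. By definition of the floor, $i_j \le aj/r < i_j + 1$, and multiplying by $r/a$ (allowed since $r \ge 1$) yields $r i_j / a \le j < r(i_j+1)/a$. Because $j$ is an integer, this forces $\lceil r i_j/a \rceil \le j$ and $\lceil r(i_j+1)/a \rceil \ge j+1$, which is the desired jump, giving $|S_{i_j}| = q+1$.

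To close the loop I would verify that the $r$ indices $i_0, i_1, \ldots, i_{r-1}$ are distinct: the subadditivity $\lfloor x+y\rfloor \ge \lfloor x \rfloor + \lfloor y \rfloor$ gives $i_{j+1} - i_j \ge \lfloor a/r \rfloor \ge 1$ because $a > r \ge 1$. Moreover, since the $S_i$ partition $[b]$, we have $\sum_{i=0}^{a-1}|S_i| = b = qa + r$, so exactly $r$ of the blocks have size $q+1$ and the remaining $a-r$ have size $q$. Hence the $r$ values $\lfloor aj/r\rfloor$ exhaust all blocks of cardinality $q+1$, which is stronger than what the lemma asks for. The only real obstacle is careful bookkeeping of strict versus non-strict inequalities when translating between floors and ceilings; the logical content is essentially an unwinding of the definition of $S_i$.
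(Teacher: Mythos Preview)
Your proof is correct and follows essentially the same approach as the paper: both reduce $|S_i|$ to $q + \lceil r(i+1)/a\rceil - \lceil ri/a\rceil$ and then identify the jump points of $i\mapsto \lceil ri/a\rceil$ via the equivalence $ri/a \le j < r(i+1)/a \iff i=\lfloor aj/r\rfloor$. The only cosmetic difference is that the paper runs this as a biconditional in one line, whereas you prove the ``if'' direction explicitly and then recover the converse by the counting argument $\sum_i |S_i|=qa+r$; both yield the full characterization.
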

\begin{proof}
By definition, $|S_i|=\lceil(i+1)b/a\rceil-\lceil ib/a\rceil=q+\lceil(i+1)r/a\rceil-\lceil ir/a\rceil.$
Therefore, $|S_i|=q+1$ if and only if there exists some integer $0\leq j\leq r-1$ such that $ir/a\leq j< (i+1)r/a.$
The above inequality can be rewritten as $i\leq ja/r {<} i+1,$ and hence $i=\lfloor ja/r\rfloor.$
\end{proof}

\medskip

\begin{example}
Let $a=3,$ $b=8,$ and $r=2$ be the remainder of $b$ divided by $a.$ Then
\begin{equation}    \label{eq_exam1}
\lceil\frac{3k}{8}\rceil=\sum_{i=1}^{3}\lfloor\frac{k+\lceil 8i/3 \rceil-1}{8}\rfloor
=\lfloor\frac{k+2}{8}\rfloor+\lfloor\frac{k+5}{8}\rfloor+\lfloor\frac{k+7}{8}\rfloor,
\end{equation}
and $[8]=\{0,1,\ldots,7\}$ can be partitioned into 3 subsets such that
$$S_0=\{0,1,2\},~S_1=\{3,4,5\}~\text{and}~S_2=\{6,7\},$$
where the subsets numbered with $\lfloor \frac{aj}{r}\rfloor=0$ and $1$ as $j=0$ and $1,$ respectively, have more than $1$ elements.
{Note that the maximal elements $2,5,7$ of subsets $S_i$'s are the integers in~\eqref{eq_exam1} that construct $\lceil\frac{3k}{8}\rceil$.}
\end{example}

\medskip

Additionally, we need a result of the comparison between two sequences.
For two real finite non-decreasing sequences $A=(a_i),A'=(a'_i)$ of the same length $n$, we say that $A\leq A'$ if $a_i\leq a'_i$ for $i=0,1,\ldots,n-1.$
\begin{lemma}       \label{lemma_seq_ineq}
Let $A$ and $A'$ be two subsequences of a real finite non-decreasing sequence $B$ which have equal length $0<|A|=|A'|<|B|$. Then $A\leq A'$ if and only if $B\setminus A'\leq B\setminus A.$
\end{lemma}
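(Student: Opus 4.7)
The plan is to encode both inequalities in terms of a rank (counting) function, and then exploit the fact that $A$ and $B\setminus A$ partition the positions of $B$. For any non-decreasing finite sequence $X=(x_0,x_1,\ldots)$ and any real number $v$, set
\[
N_X(v)\;=\;|\{i:x_i\leq v\}|.
\]
The first step I would carry out is the standard equivalence: if $X$ and $Y$ are non-decreasing sequences of equal length, then $X\leq Y$ if and only if $N_X(v)\geq N_Y(v)$ for every real $v$. The forward direction is immediate, since $y_j\leq v$ forces $x_j\leq v$; the reverse direction is obtained by specializing to $v=y_j$, where $N_Y(y_j)\geq j+1$ forces $N_X(y_j)\geq j+1$, and hence $x_j\leq y_j$.

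With this equivalence in hand, the lemma becomes a one-line bookkeeping argument. Since $A$ is selected from $B$ by a choice of positions and $B\setminus A$ collects the complementary positions, every element of $B$ is counted exactly once on each side, so for every real $v$
\[
N_A(v)+N_{B\setminus A}(v)\;=\;N_B(v)\;=\;N_{A'}(v)+N_{B\setminus A'}(v).
\]
Therefore $N_A(v)\geq N_{A'}(v)$ for all $v$ if and only if $N_{B\setminus A'}(v)\geq N_{B\setminus A}(v)$ for all $v$; applying the first-step equivalence to both pairs yields $A\leq A'$ if and only if $B\setminus A'\leq B\setminus A$, which is the desired biconditional.

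The only point that really requires care is the equivalence established in the first step: it is a standard majorization-style fact, but it must be phrased so that repeated values in $B$ (and hence in $A$, $A'$ or their complements) cause no ambiguity. This is exactly the reason I would route the argument through $N_X(v)$, which counts positions rather than distinct values and thereby absorbs all tie-breaking automatically; a direct index-tracking proof would otherwise have to split into cases whenever $B$ has repeated entries, and I expect that bookkeeping to be the main place where an attempted proof can go wrong.
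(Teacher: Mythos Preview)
Your proof is correct and takes a genuinely different route from the paper's. The paper argues only one direction (the other follows by symmetry) and proceeds by induction on $|A|$: for the inductive step it strips the first entries $a_0,a'_0$ from $A,A'$, applies the hypothesis to get $B\setminus(a'_i)_{i=1}^{m-1}\leq B\setminus(a_i)_{i=1}^{m-1}$, and then swaps the single remaining entry $a_0$ for $a'_0$ to reach $B\setminus A'\leq B\setminus A$. Your argument instead encodes the entrywise comparison via the rank function $N_X(v)$ and uses the additivity $N_A(v)+N_{B\setminus A}(v)=N_B(v)$ to turn the statement into a tautology. This handles both directions at once, avoids induction, and sidesteps the tie-breaking bookkeeping that the paper's element-swapping step must implicitly manage; the paper's approach, on the other hand, stays closer to the concrete sequence manipulations used elsewhere in the argument.
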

\begin{proof}
Because of the symmetry, we prove $A\leq A'$ implies $B\setminus A'\leq B\setminus A$ by induction on $|A|$ in the following.
It is clearly true when $|A|=1.$ Suppose the statement is correct for $|A|<m<|B|.$ Assume that $A=(a_i)_{i=0}^{m-1}$ and $A'=(a'_i)_{i=0}^{m-1}$ satisfying $A\leq A'.$
From induction hypothesis, $B\setminus (a'_i)_{i=1}^{m-1}\leq B\setminus (a_i)_{i=1}^{m-1}.$
It is clear that the non-decreasing sequence obtained by exchanging an entry $a$ of the original sequence into $\tilde{a}\geq a$ (and inserting $\tilde{a}$ to the appropriate position) is not less than the original sequence. Thus, we have
$$B\setminus A' \leq B\setminus \widetilde{A} \leq B\setminus A,$$
where $\widetilde{A}$ is obtained from $A$ by deleting $a_0$ and adding $a'_0.$ The result follows.
\end{proof}

\bigskip

Now, we are ready to find the desired integer $\{k\}$-domination function $f$.
Let $[a]:=\{0,1,\ldots,a-1\}$ for each positive integer $a.$ For a sequence $A$ of length $a,$ let the entries of $A$ indexed by $[a]$ and $A(i)$ be the $i$-th entry of $A.$ For $0\leq i<j\leq a,$ the subsequence $A[i:j]:=[A(i),A(i+1),\ldots,A(j-1)].$ If $A$ is a permutation of $[a],$ then the complement of $A$ is a sequence $\overline{A}$ of length $a$ defined as $\overline{A}(i)=a-1-A(i)$ for $0\leq i\leq a-1.$ The \emph{concatenation} $A\circ B$ of two sequences $A$ and $B$ of lengths $a$ and $b,$ respectively, is a sequence of length $a+b$ obtained by attaching $B$ to $A$ defined as $(A\circ B)(i)=A(i)$ if $0\leq i\leq a-1$ and $(A\circ B)(j)=B(j-a)$ if $a\leq j\leq a+b-1.$

\medskip

Let $A$ be a permutation of $[a]$ and thus a sequence of length $a$. For positive integers $a<b,$ we call $B$ the \emph{extension sequence} of the pair $(A,b)$ if $B$ is a permutation of $[b]$ satisfying $B(i)<B(j)$ if and only if $A(i_0)<A(j_0)$ or $A(i_0)=A(j_0)$ with $i<j,$ where $i_0$ and $j_0$ are the remainders of $i$ and $j$ divided by $a,$ respectively.
For example, when $(a,b)=(3,7)$ and $A=[0,1,2],$ the extension sequence of $(A,b)$ is $B=[0,3,5,1,4,6,2],$ which is attained by extending $A$ to the sequence $[0,1,2,0,1,2,0]$ of length $7$ and renumbering it with $0,1,\ldots,6.$

\medskip

A permutation $A$ of $[a]$ is said to be \emph{nice} corresponding to some $b>a$ with $a\nmid b$ if
\begin{equation}    \label{eq_nice_1}
A(i)<A(i+r)~~~\text{for}~0\leq i\leq a-r-1
\end{equation}
and
\begin{equation}    \label{eq_nice_2}
A(j)<A(j-a+r)~~~\text{for}~a-r\leq j\leq a-d-1,
\end{equation}
where $r$ is the remainder of $b$ divided by $a$ and $d=\gcd(a,b).$ Note that if $r=d$ then the condition \eqref{eq_nice_2} can be ignored. For example, $[1,3,0,2,4]$ is nice corresponding to $8$ (or any larger integer congruent to $3$ modulo $5$) and $[4,1,6,3,0,5,2,7]$ is nice corresponding to $13.$

\medskip

The following properties will carry out the recursive constructions.
\begin{proposition}       \label{prop_nice_perm}
Suppose that $R$ is a nice permutation of $[r]$ corresponding to some $a>r$ with $r\nmid a.$ Let $\overline{R}$ be the complement of $R.$ Then the extension sequence of $(\overline{R},a)$ is also nice corresponding to some $b>a$ with $b\equiv r~(\text{mod}~a).$
\end{proposition}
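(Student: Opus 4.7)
The plan is to take $b:=a+r$. Since $0<r<a$, this gives $a\nmid b$, the remainder of $b$ divided by $a$ equals $r$, and $\gcd(a,b)=\gcd(a,r)=d$, where $d=\gcd(r,a)$ is already the gcd appearing in the niceness of $R$ corresponding to $a$. Hence the two conditions \eqref{eq_nice_1} and \eqref{eq_nice_2} required of $B$ corresponding to $b$ use exactly the parameters $r$ and $d$. Writing $r_1$ for the remainder of $a$ divided by $r$ (so $a=q_1r+r_1$), the niceness of $R$ corresponding to $a$ provides
\begin{align*}
R(i)&<R(i+r_1) && \text{for } 0\le i\le r-r_1-1,\\
R(j)&<R(j-r+r_1) && \text{for } r-r_1\le j\le r-d-1.
\end{align*}

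Condition \eqref{eq_nice_1} on $B$ requires $B(i)<B(i+r)$ for $0\le i\le a-r-1$. This is automatic: since $i\equiv i+r\pmod{r}$, we have $i_0=(i+r)_0$, hence $\overline{R}(i_0)=\overline{R}((i+r)_0)$, and the tie-breaking rule in the definition of the extension sequence together with $i<i+r$ forces $B(i)<B(i+r)$. In particular this step uses nothing about $R$ beyond its existence.

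Condition \eqref{eq_nice_2} on $B$ requires $B(j)<B(j-a+r)$ for $a-r\le j\le a-d-1$. Setting $j':=j-a+r\in\{0,1,\ldots,r-d-1\}$ gives $j'_0=j'$ and $j>j'$ (as $a>r$), so the tie-breaking rule runs the wrong way and a genuinely strict inequality $\overline{R}(j_0)<\overline{R}(j'_0)$ is needed, equivalently $R(j'_0)<R(j_0)$. Reindexing with $i:=j'\in\{0,\ldots,r-d-1\}$ and using $a\equiv r_1\pmod{r}$ yields $j_0=(r_1+i)\bmod r$, so what must be shown is $R(i)<R((r_1+i)\bmod r)$ on this range. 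A case split according to whether $i\le r-r_1-1$ (no wraparound, target $i+r_1$) or $r-r_1\le i\le r-d-1$ (wraparound, target $i+r_1-r$) matches these two subcases exactly with the first and second niceness conditions of $R$ respectively, completing the verification.

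The step requiring the most care is the last: one must check that the reindexing $j\mapsto i=j-(a-r)$ and the case split at $i=r-r_1$ partition the range $\{0,\ldots,r-d-1\}$ into exactly the two intervals $[0,r-r_1-1]$ and $[r-r_1,r-d-1]$ appearing in $R$'s niceness, and that after the modular shift by $r_1$ the target indices line up entry-by-entry. When $r_1=d$ the second case is empty, which is consistent with the convention that \eqref{eq_nice_2} may be dropped; otherwise both cases genuinely contribute. All other steps are routine unpacking of the definitions.
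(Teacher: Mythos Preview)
Your proof is correct and essentially the same as the paper's: both verify \eqref{eq_nice_1} immediately from the tie-breaking rule, then split \eqref{eq_nice_2} into the two cases $0\le j'\le r-r_1-1$ and $r-r_1\le j'\le r-d-1$ (the paper writes $s$ for your $r_1$ and keeps the index $j$ rather than reindexing to $j'$), invoking respectively the first and second niceness conditions of $R$. The only cosmetic difference is that you translate the needed inequality $\overline{R}(j_0)<\overline{R}(j'_0)$ back into an inequality about $R$, whereas the paper rewrites the niceness of $R$ as inequalities about $\overline{R}$; and you fix $b=a+r$, which is harmless since niceness depends only on the remainder $r$ and on $d=\gcd(a,b)=\gcd(a,r)$.
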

\begin{proof}
Let $A$ be the extension sequence of $(\overline{R},a).$ Note that $A(i)<A(i+r)$ for $0\leq i\leq a-r-1$ can be verified directly by the definition of extension sequences. Assume that $s$ is the remainder of $a$ divided by $r.$
It's left to consider the case $a-r\leq j\leq a-d-1$ where $r$ is the remainder of $b$ divided by $a$ and $d=\gcd(a,b).$ Assume that $s$ is the remainder of $a$ divided by $r.$ By Euclidean algorithm, $s\geq d.$

\smallskip

\noindent\textbf{Case 1: $a-r\leq j\leq a-s-1.$}\\
In order to show $A(j)<A(j-a+r),$ we observe that the remainders of $j$ and $j-a+r$ divided by $r$ are $j'+s$ and $j'$, respectively, where $j'=j-a+r.$ Moreover, since $R$ is nice, we have $\overline{R}(i+s)<\overline{R}(i)$ for $0\leq i\leq r-s-1.$ The result is straightforward by the definition of extension sequences.

\smallskip

\noindent\textbf{Case 2: $a-s\leq j\leq a-d-1.$}\\
In this case, the remainders of $j$ and $j-a+r$ divided by $r$ become $j'-r+s$ and $j'$, respectively, where $j'=j-a+r.$ Once again, since $R$ is nice, $\overline{R}(i-r+s)<\overline{R}(i)$ for $r-s\leq i\leq r-d-1.$ We have the proof.
\end{proof}

\medskip

\begin{proposition}     \label{prop_latter_B}
Let positive integers $a<b$ with $r>0$ the remainder of $b$ divided by $a$ and $R$ a permutation of $[r]$ with complement $\overline{R}.$
If the extension sequence $A$ of $(R,a)$ satisfies
$$\lceil\frac{rk}{a}\rceil = \sum_{i=a-r}^{a-1} \lfloor\frac{k+A(i)}{a}\rfloor,$$
then the extension sequence $B$ of $(A',b)$ satisfies
$$\lceil\frac{ak}{b}\rceil = \sum_{i=b-a}^{b-1} \lfloor\frac{k+B(i)}{b}\rfloor$$
where $A'$ is the extension sequence of $(\overline{R},a).$
\end{proposition}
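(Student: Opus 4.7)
The plan is to use Lemma~\ref{lem2.1}(3) via the multiset $\{B(b-a),\ldots,B(b-1)\}$: since $\sum_i\lfloor(k+B(i))/b\rfloor$ depends only on this multiset, the whole task reduces to verifying
$$\{B(b-a),\ldots,B(b-1)\}=\{\lceil(v+1)b/a\rceil-1 : v=0,1,\ldots,a-1\}$$
as multisets. The main tool is a structural observation about extension sequences: if $E$ is the extension of $(P,L)$ with $P$ a permutation of $[p]$ and $L\geq p$, and $\Sigma_E(v)$ denotes the sum of multiplicities of priorities $<v$ in $E$, then positions sharing priority $v$ receive the consecutive values $\Sigma_E(v),\ldots,\Sigma_E(v+1)-1$ in index order. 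Since both the first $p$ positions and the last $p$ positions of $E$ have residues mod $p$ covering $[p]$ exactly once, each priority $v$ appears at its first and last occurrence in these two blocks, contributing the values $\Sigma_E(v)$ and $\Sigma_E(v+1)-1$ respectively.

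Applied to $A$ with $M:=\Sigma_A$, the last-$r$-positions observation gives $\{A(a-r),\ldots,A(a-1)\}=\{M(v+1)-1:v\in[r]\}$; comparing with Corollary~\ref{cor_ceil_floor} forces $M(v)=\lceil va/r\rceil$ for $0\leq v\leq r$. For $A'$, note $\overline{R}^{-1}(u)=R^{-1}(r-1-u)$, so the multiplicity functions of $A'$ and $A$ satisfy $m''(u)=m'(r-1-u)$, and summing yields $M'(u)=a-M(r-u)=\lfloor ua/r\rfloor$ with $M':=\Sigma_{A'}$. The first-$r$-positions observation applied to $A'$ then delivers the pivotal identity
$$A'([r])=\{M'(u):u\in[r]\}=\{\lfloor ua/r\rfloor:u\in[r]\}.$$

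Transferring to $B$ with $b=qa+r$: a priority $v\in[a]$ has multiplicity $q+1$ in $B$ iff $(A')^{-1}(v)<r$, equivalently $v\in A'([r])$; so $N(v):=\Sigma_B(v)=qv+|A'([r])\cap[v]|$. By the identity above,
$$|A'([r])\cap[v]|=|\{u\in[r]:\lfloor ua/r\rfloor<v\}|=|\{u\in\mathbb{Z}_{\geq 0}:u<vr/a\}|=\lceil vr/a\rceil$$
for $0\leq v\leq a$, hence $N(v)=qv+\lceil vr/a\rceil=\lceil vb/a\rceil$. The last-$a$-positions observation applied to $B$ then identifies $\{B(b-a),\ldots,B(b-1)\}$ as $\{N(v+1)-1:v\in[a]\}=\{\lceil(v+1)b/a\rceil-1:v\in[a]\}$, completing the reduction. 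The main obstacle is the bookkeeping of multiplicities across the two nested extension sequences; the clean formula $A'([r])=\{\lfloor ua/r\rfloor:u\in[r]\}$ is the pivotal step, as it is exactly what converts the hypothesis at scale $(r,a)$ into the conclusion at scale $(a,b)$.
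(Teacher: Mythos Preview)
Your proof is correct and follows essentially the same route as the paper: use Corollary~\ref{cor_ceil_floor} to pin down $\{A(a-r),\ldots,A(a-1)\}=\{\lceil ja/r\rceil-1\}$, pass to the complement to obtain the pivotal identity $A'([r])=\{\lfloor ua/r\rfloor:u\in[r]\}$, and then (what the paper phrases via Lemma~\ref{lemma_set_q+1}) read off that $\{B(b-a),\ldots,B(b-1)\}=\{\lceil(v+1)b/a\rceil-1:v\in[a]\}$. Your $\Sigma_E$/multiplicity bookkeeping renders the middle step and the final count $N(v)=\lceil vb/a\rceil$ more transparent than the paper's somewhat elliptical argument, but the underlying strategy is the same.
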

\begin{proof}
By {Corollary \ref{cor_ceil_floor}},
$$\left\{A(i)\mid a-r\leq i\leq a-1\right\}=\left\{\lceil \frac{aj}{r}\rceil-1 \mid 1\leq j\leq r\right\}.$$
Let $q$ and $s$ be the quotient and remainder of $a$ divided by $r,$ respectively. Claim that the set of ${A'}[0:r]$ equals the set of $a-1-A[a-r:a-1].$ It is clear for $s=0.$ If $s>0,$ we have
$$A(i) = a-1-{A'}(a-s+i)~~~\text{for}~i=0,1,\ldots,s-1,$$
since entries in $A$ that larger than $A(i)$ become smaller than ${A'}(a-s+i)$ in ${A'},$ and vice versa. Therefore, the set of $A[s+jr:s+(j+1)r]$ equals to the set of $a-1-{A'}[a-s-(j+1)r:a-s-jr]$ for $j=0,1,\ldots,q.$ The claim follows by taking $j=q-1$.
Moreover, since
$$a-1-(\lceil\frac{ai}{r}\rceil-1)=a+\lfloor\frac{-ai}{r}\rfloor=\lfloor\frac{a(r-i)}{r}\rfloor$$
for $1\leq i\leq r,$ we have
$$\left\{{A'}(i)\mid 0\leq i\leq r-1\right\}=\left\{\lfloor \frac{aj}{r}\rfloor \mid 0\leq j\leq r-1\right\}$$
which exactly indicates the indices of subsets defined in Lemma~\ref{lemma_set_q+1}.
Hence the set of $B[b-a:b]$ gives the maximal elements in each of the subsets $S_0,S_1,\ldots,S_{a-1},$ and this fact completes the proof.
\end{proof}

\medskip

For each pair of positive integers $(a,b)$ with $a<b,$ define two codes ${C_1}$ and ${C_2}$ as follows. If $a$ divides $b,$ then
$$C_1(a,b):=[0,1,\ldots,a-1].$$
If the remainder $r$ of $b$ divided by $a$ is positive, then
$$C_1(a,b):=~\text{the extension sequence of}~ (\overline{C_1(r,a)},a)$$
where $\overline{C_1}(r,a)$ is the complement of ${C_1}(r,a).$ Now $C_2$ can be constructed subsequently. Let $C_2(a,b)$ be the extension sequence of $(C_1(a,b),b).$ It is clear that $C_1(a,b)$ and $C_2(a,b)$ are permutations of $[a]$ and $[b],$ respectively. Suppose that each entry $\alpha$ in $C_2(a,b)$ is corresponding to $\lfloor \frac{k+\alpha}{b}\rfloor.$ Then the following result can be obtained by proving that
$$C:=B[b-a:b]\circ\underbrace{B\circ B\circ\cdots\circ B}_q$$
is a feasible distribution of the circulant graph $G,$ where $b=2t+1,$ $n=qb+a,$ and $B=C_2(a,b).$
\begin{theorem}     \label{thm_main}
$$\gamma_k(G) = \lceil \frac{kn}{2t+1} \rceil.$$
\end{theorem}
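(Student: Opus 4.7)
By Proposition~\ref{prop1.1} it remains to exhibit an integer $\{k\}$-domination function $f$ of $G$ with total weight at most $\lceil nk/(2t+1)\rceil$. Set $b=2t+1$ and write $n=qb+a$ with $0\le a<b$; the case $a=0$ is handled by the $b$-periodic labeling $f(v_j)=\lfloor(k+(j\bmod b))/b\rfloor$, so assume $a>0$. We adopt the labeling flagged just before the statement,
\[
f(v_j)=\left\lfloor\frac{k+C(j)}{b}\right\rfloor,\qquad j=0,1,\ldots,n-1,
\]
with $C=B[b-a:b]\circ\underbrace{B\circ\cdots\circ B}_{q}$ and $B=C_2(a,b)$, and verify the total weight and the domination condition in turn.

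The total-weight calculation is essentially bookkeeping. Each of the $q$ copies of $B$ is a permutation of $[b]$, so contributes $\sum_{i=0}^{b-1}\lfloor(k+B(i))/b\rfloor=\sum_{i=0}^{b-1}\lfloor(k+i)/b\rfloor=k$ by Lemma~\ref{lem2.1}(2). Proposition~\ref{prop_latter_B}, applied inductively along the Euclidean descent that defines $C_2(a,b)$, shows that the prefix $B[b-a:b]$ contributes $\sum_{i=b-a}^{b-1}\lfloor(k+B(i))/b\rfloor=\lceil ak/b\rceil$. Summing, $\sum_{j=0}^{n-1}f(v_j)=qk+\lceil ak/b\rceil=\lceil nk/b\rceil$, matching Proposition~\ref{prop1.1}.

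The domination condition requires that every window of $b$ cyclically consecutive vertices of the cycle underlying $G$ have label-sum at least $k$. From $C(j)=B((j-a)\bmod b)$ we see that any window that does not cross the seam between positions $n-1$ and $0$ is a cyclic shift of $B$, and so contributes exactly $k$. For a wrapping window starting at position $n-\ell$ with $1\le\ell\le b-1$, the tail picks out $B$-indices $\mathcal T=[b-\ell,b-1]$ while the head picks out a cyclic interval $\mathcal H\subseteq[b]$ of length $b-\ell$ beginning at $b-a$. A case analysis on $\ell$ relative to $\min(a,b-a)$ and $\max(a,b-a)$ shows that the double-cover set $O=\mathcal T\cap\mathcal H$ is always contained in $[b-a,b-1]$ and the missed set $M=[b]\setminus(\mathcal T\cup\mathcal H)$ in $[0,b-a-1]$, with $|O|=|M|$. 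Consequently the window sum equals
\[
k+\sum_{i\in O}\left\lfloor\frac{k+B(i)}{b}\right\rfloor-\sum_{i\in M}\left\lfloor\frac{k+B(i)}{b}\right\rfloor,
\]
so it suffices to show that the multiset $\{B(i):i\in O\}$ componentwise dominates $\{B(i):i\in M\}$ after sorting.

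This componentwise domination is the crux of the proof, and is where the preceding machinery is spent. Proposition~\ref{prop_latter_B} pins the values $\{B(i):i\in[b-a,b-1]\}$ to the block maxima $\{\lceil jb/a\rceil-1:1\le j\le a\}$ of Lemma~\ref{lemma_set_q+1}, while the \emph{nice} property of $C_1(a,b)$, preserved through recursive extension by Proposition~\ref{prop_nice_perm}, controls the relative order of $B$ between its high- and low-index blocks. Lemma~\ref{lemma_seq_ineq} then allows the subsequence inequality to be rephrased as one between the complementary subsequences of the sorted list $0,1,\ldots,b-1$, matching the Euclidean recursion that built $B$. Monotonicity of $x\mapsto\lfloor(k+x)/b\rfloor$ finally upgrades $B$-value domination to the required inequality of label-sums, completing the verification.
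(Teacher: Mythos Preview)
Your setup, the total-weight computation, and the treatment of the non-wrapping windows are correct and match the paper. The reduction of each wrapping window to the comparison of the two index sets $O\subseteq[b-a,b-1]$ and $M\subseteq[0,b-a-1]$ of equal size is also correct (and in fact both turn out to be contiguous intervals, which you do not note but is useful).

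The gap is in your final paragraph. You assert that ``the machinery'' yields the sorted-componentwise inequality $\{B(i):i\in O\}\ge\{B(i):i\in M\}$, but you never actually show it. Saying that Proposition~\ref{prop_latter_B} identifies $\{B(i):i\in[b-a,b-1]\}$ with the block maxima, and that niceness ``controls the relative order'', does not by itself give the inequality for \emph{arbitrary} equal-length subintervals $O$ and $M$ of the two halves: one still needs a pointwise comparison along a matching of $O$ with $M$, and you do not produce one. Likewise, your appeal to Lemma~\ref{lemma_seq_ineq} is only a reformulation device; you do not say which two subsequences of $[0,1,\ldots,b-1]$ you are comparing, nor why the hypothesis $A\le A'$ holds.

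What the paper does differently is to isolate a single concrete intermediate claim that makes everything fall out: every length-$a$ substring $C[i:i+a]$ satisfies $C[i:i+a]\le B[b-a:b]$ after sorting. This is proved in two pieces. For $i\ge a-d$ it follows from the extension-sequence definition alone, since then $C[i:i+a]$ is a \emph{linear} block $B[j:j+a]$ and one has $B(m)<B(m+a)$ for all $m$ (same residue mod $a$, smaller index). For $0\le i\le a-d-1$ the block wraps around inside $B$, and here the two \emph{nice} inequalities \eqref{eq_nice_1}--\eqref{eq_nice_2} on $A=C_1(a,b)$ give exactly $B(i)<B(b-a+i)$, so $C[i+1:i+1+a]\le C[i:i+a]$ step by step down from $B[b-a:b]$. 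Once $C[i:i+a]\le B[b-a:b]$ is in hand for all $i$, Lemma~\ref{lemma_seq_ineq} applied with the ambient sorted sequence $[0,1,\ldots,b-1]$ converts it into the statement that every length-$b$ window of $C$ is $\ge[0,1,\ldots,b-1]$, which is precisely your $O$-versus-$M$ inequality. Your outline gestures at these ingredients but does not assemble them; the missing step is to state and verify the inequality $C[i:i+a]\le B[b-a:b]$ (equivalently, $B(i)<B(b-a+i)$ on the appropriate range of $i$) and then say explicitly how Lemma~\ref{lemma_seq_ineq} is applied.
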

\begin{proof}
By Proposition \ref{prop1.1}, it suffices to show that $\gamma_k\leq \lceil\frac{kn}{2t+1}\rceil$. Let $G=G(n;\{1,2,\ldots,t\})$, $n=qb+a$ and $b=2t+1$. First, we construct $B[b-a:b]$.
If $a$ divides $b$ such that $b=\ell a,$ then
$$B[b-a:b]=[\ell-1,2\ell-1,\ldots,a\ell-1]$$
which collects the numbers $\lceil ib/a\rceil-1$ for $1\leq i\leq a$ given in Lemma~\ref{lem2.1}. Therefore, any substring of $C$ of length $a$ is not larger than $B[b-a:b].$ By Lemma~\ref{lemma_seq_ineq}, 
every length $b$ string of $B\circ B[b-a:b]$ or $B[b-a:b]\circ B$ is not less than $[0,1,\ldots,b-1],$ so does $C.$
Furthermore, the sequence $[0,1,\ldots,b-1]$ is of sum
$$\sum_{i=0}^{b-1}\lfloor\frac{k+i}{b}\rfloor=k,$$
which confirms the case for $a$ divides $b.$

\medskip

On the other hand, let $a>\gcd(a,b)$ and $r$ be the remainder of $b$ divided by $a.$ Since the initial case is examined above, by Proposition~\ref{prop_latter_B}, $B[b-a:b]$ collects the elements $\{\lceil ib/a\rceil-1\}_{i=1}^a.$ For the initial case, if $r$ divides $a$ then $C_1(r,a)=[0,1,\ldots,r-1]$ and it is easy to check that $C_1(a,b)$ is nice. Moreover, by Proposition~\ref{prop_nice_perm}, $C_1(a,b)$ is always nice, and hence
$$C[i:i+a]\leq B[b-a:b]~~~\text{for}~0\leq i\leq a-d-1,$$
where $d=\gcd(a,b).$ Moreover, we also have $C[i:i+a]\leq B[b-a:b]$ for $a-d\leq i\leq qb-1$ immediately from the construction of $C.$ The result follows.
\end{proof}

\medskip

\begin{example}
Assume that $G(n;D)$ is a circulant graph on $n=8$ vertices with $D=\{1,2\}$ (i.e., $t=2$). Let $b=2t+1=5$ and $a=3$ be the remainder of $n$ divided by $b$. First of all, we obtain $C_1(3,5)$ by the process of Euclidean algorithm. Since the initial condition $C_1(1,2)=[0],$ $C_1(2,3)=[0,1]$ is directly the extension code of $[0].$ Next, the complement of $C_1(2,3)$ is $\overline{C_1(2,3)}=[1,0]$. Thus, $C_1(3,5)=[1,0,2],$ the extension code of $([1,0],3).$ Hence,
$$C_2(3,5)=[2,0,4,3,1],$$
the extension code of $(C_1(3,5),5).$ Attach the last $3$ entries in front of $C_2(3,5),$ we attain the distribution $[4,3,1,2,0,4,3,1].$
Then the circular sequence $f(v):~v\in V(G)$ is given by
$$(\lfloor\frac{k+4}{5}\rfloor,\lfloor\frac{k+3}{5}\rfloor,\lfloor\frac{k+1}{5}\rfloor,\lfloor\frac{k+2}{5}\rfloor,
\lfloor\frac{k}{5}\rfloor,\lfloor\frac{k+4}{5}\rfloor,\lfloor\frac{k+3}{5}\rfloor,\lfloor\frac{k+1}{5}\rfloor)$$
which satisfies $\sum_{v\in V(G)}f(v)=\lceil 8k/5\rceil$ and $\sum_{u\in N_G[v]}f(u)\geq k$ for each $v\in V(G).$
\end{example}

\section{Concluding remark}
We remark finally that the construction of code $C_2$ can be obtained by giving an algorithm with inputs $a$ and $b$.
\medskip

\begin{algorithm}[H]    \label{algorithm_1}
 \KwData{Positive integers $a<b.$}
 \KwResult{$C_2(a,b).$}
%
 \SetKwFunction{FMain}{Main}
 \SetKwFunction{C}{$C_1$}
%

 \C{$a$,$b$}{
    \If{$a=\gcd(a,b)$}{
        \KwRet $[0,1,\ldots,a-1]$\;
    }
    \Else{
        $r\leftarrow$ the remainder of $b$ divided by $a$\;
        $R\leftarrow C_1(r,a)$\;
        \KwRet the extension sequence of $(\overline{R},a)$\;
    }
 }
 \FMain{$a$,$b$}{
    \KwRet the extension sequence of $(C_1(a,b),b)$\;
 }
\end{algorithm}

\medskip


\end{document}